\newtheorem{thm}{Theorem}[section]
\newtheorem{prop}[thm]{Proposition}
\newtheorem{defn}[thm]{Definition}
\newtheorem{eg}[thm]{Example}
\newtheorem{rmk}[thm]{Remark}
\newcommand{\T}{\mathbb{T}}
\title[A Geometric Conjecture and Some Properties of Blaschke Products]{Exploring a Geometric Conjecture, Some Properties of Blaschke Products, and the Geometry of Curves Formed by Them}
\author{Mehmet \c{C}elik$^{1, \dagger}$}
\address{$^1$ Department of Mathematics, Texas A\&M University-Commerce, 2200 Campbell Street, Commerce, 75428, TX, USA}
\author{Mathis Duguin$^{2, \dagger}$}
\address{$^2$ \'{E}cole Polytechnique F\'{e}d\'{e}rale de Lausanne, Rte Cantonale, Lausanne, 1015, Switzerland}
\author{Jia Guo$^{3, \dagger}$}
\address{$^3$ Department of Mathematics, University of Michigan, 530 Church St, Ann Arbor, 48109, MI, USA}
\author{Dianlun Luo$^{4, \dagger}$}
\address{$^4$ Department of Mathematics, Columbia University, 2990 Broadway, New York, 10027, New York, USA}
\author{Kamryn Spinelli$^{5, \dagger}$}
\address{$^5$ Department of Mathematics, Brandeis University, MS 050, 415 South Street, Waltham, 02453, MA, USA}
\author{Yunus E. Zeytuncu$^{6, \dagger}$}
\address{$^6$ Department of Mathematics and Statistics, University of Michigan-Dearborn, 4901 Evergreen Rd., Dearborn, 48128, MI, USA}
\author{Zhuoyu Zhu$^{3, \dagger}$}
\address{$^\dagger$ These authors contributed equally to this work.}
\begin{document}

\begin{abstract}
    In 2021, Dan Reznik made a YouTube video demonstrating that power circles of Poncelet triangles have an invariant total area. He made a simulation based on this observation and put forward a few conjectures. One of these conjectures suggests that the sum of the areas of three circles, each centered at the midpoint of a side of the Poncelet triangle and passing through the opposite vertex, remains constant. In this paper, we provide a proof of Reznik's conjecture and present a formula for calculating the total sum. Additionally, we demonstrate the algebraic structures formed by various sets of products and the geometric properties of polygons and ellipses created by these products.
\end{abstract}

\keywords{Finite Blaschke products, Poncelet ellipse, Blaschke ellipse, disk automorphism}

\subjclass{30J10, 53A04}

\maketitle

\section{Introduction}\label{sec:introduction}
Consider two ellipses, with one ellipse enclosed by the other, and a triangle inscribed in the larger ellipse which circumscribes the smaller ellipse. Then, by Poncelet’s theorem, each point on the outer ellipse is a vertex for one such triangle \cite[Theorem 5.6]{Gorkin}. In the case where the outer ellipse is a circle, the inner ellipses are known as Poncelet 3-ellipses and the circumscribing triangles are known as Poncelet triangles or Poncelet 3-periodics \cite[Chapter 5]{Gorkin}.

In one of his YouTube videos from 2021, Dan Reznik demonstrated that power circles of Poncelet triangles have invariant total area by creating a simulation of that observation and proposed it as a conjecture \cite{Reznik}. More precisely, consider a Poncelet $3$-ellipse with its major and minor axes on the $x$ and $y$ axes, and form three circles, each centered at the midpoint of each side of the Poncelet triangle and passing through the opposite vertex of the triangle; see Figure \ref{triangles}. The conjecture is that \textit{the sum of the areas of these three circles is constant}. This paper presents a proof of Reznik's conjecture (Theorem \ref{thm triangles}). The proof of Theorem \ref{thm triangles} involves the application of fundamental properties of Blaschke $3$-ellipses due to being precisely the Poncelet $3$-ellipses, \cite[Corollary 5.8]{Gorkin}. A formula is constructed to represent the sum of the areas of the three circles, and it is shown that this sum is constant and depends only on the Poncelet ellipse's foci.
\begin{figure}[htp]
        \centering
        \includegraphics[width=8cm]{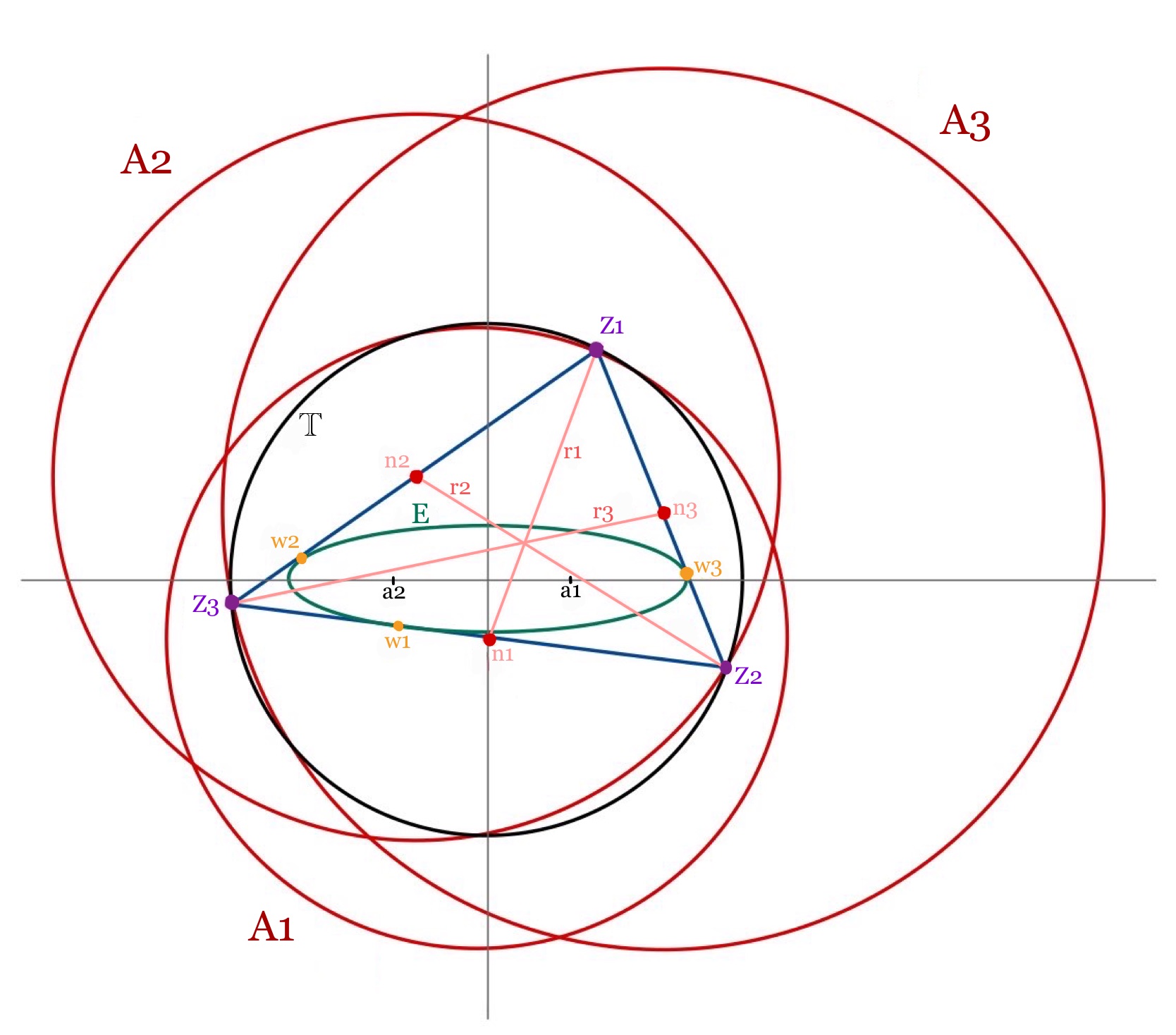}
        \caption{The three circles are separately centered at the midpoint of each side of the Poncelet triangle and pass through the opposite vertex of the triangle.}
        \label{triangles}
\end{figure}

To introduce the properties of Blaschke 3-ellipses, used in the proof of Theorem \ref{thm triangles}, and to share other observations associated with the Blaschke product and associated ellipses later in the paper, let's first introduce some definitions and notions. 
\begin{defn}
A finite Blaschke product of degree $n\in\mathbb{N}$ is a complex function of the form 
\begin{align}\label{Blaschke}
B(z)=\mu\prod_{j=1}^n\frac{z-a_j}{1-\overline{a_j}z}=\mu\prod_{j=1}^n\sigma_{a_j}(z)    
\end{align}
for $a_1,...,a_n\in\mathbb{D}$ where $\mathbb{D}$ is the open unit disk and $\mu\in\mathbb{T}$ where $\mathbb{T}$ is the unit circle.
\end{defn}
A M\"{o}bius transformation, also known as a linear fractional transformation, is a map of the form $f(z)=\frac{az+b}{cz+d}$ where $a$, $b$, $c$, $d\in\mathbb{C}$ and $ad-bc\not=0$. 
These transformations form a group called the M\"{o}bius group, also known as the projective general linear group $PGL(2,\mathbb{C})$. 
Each factor of a Blaschke product is a M\"{o}bius transformation which has a simple zero in $\mathbb{D}$, is holomorphic on a neighborhood of $\overline{\mathbb{D}}$, and maps the unit circle to itself. Accordingly, a Blaschke product maps the unit circle to itself and the unit disk to itself. The number of preimages of any point on the unit circle equals the degree of the Blaschke product. For more on the theory of Blaschke products, see, for example, \cite{Garcia}.

Blaschke products are prototypes for complex analytic functions mapping the unit disk $\mathbb{T}$ to itself. Given such a function $f$, it is possible to uniformly approximate $f$ on compact subsets of $\mathbb{D}$ using finite Blaschke products: see, for example, Carath\'{e}odory's theorem in \cite{Garnett}. In our work, we are concerned with geometric properties associated with certain curves in $\mathbb{D}$ by the Blaschke product via its preimages of points on $\mathbb{T}$.
 
Suppose a Blaschke product has degree $3$ with one of the zeros at the origin and the other two at $a$ and $b$. Then according to \cite[Theorem 2.9]{Gorkin}, for any point $\lambda$ on the unit circle, the preimages of $\lambda$ under the Blaschke product are three different points on $\mathbb{T}$ forming a triangle where each side of the triangle is tangent to the ellipse given by 
\begin{align}\label{formula-B3ellipse}
|w-a|+|w-b| = |1-\overline{a}b|.
\end{align}
The same result, \cite[Theorem 2.9]{Gorkin}, also offers a converse: Every point on the ellipse is a point of tangency of a line segment that intersects the unit circle at two points mapped to the same value under the Blaschke product. This ellipse is called a \textit{\textbf{Blaschke $3$-ellipse}}. 
    
There is a similar result for degree-$4$ Blaschke products, with some restrictions to make sure the ellipse exists; see \cite[Theorem 3.6]{GorkinWagner2017}.
Consider a degree-$3$ Blaschke product denoted by $B$, with zeros at $a$, $b$, and $c$. If $B_1(z) = zB(z)$ can be expressed as $C(D(z))$, where $C$ and $D$ are degree-$2$ normalized Blaschke products (meaning that $C(0) = 0$ and $D(0) = 0$), and $a$ corresponds to the zero of $D(z)/z$, then the Poncelet curve associated with $B_1$ can be expressed in the form 
    \begin{align}\label{formula-B4ellipse}
    |w-b|+|w-c| = |1-\bar{b}c|\sqrt{\frac{2 - |b|^{2} - |c|^{2}}{1 - |b|^{2}|c|^{2}}}.
    \end{align}   
If a Blaschke product can be written as a composition with Blaschke products of lower degrees but degrees greater than $1$, we say that the Blaschke product is decomposable. Thus, the Blaschke curve associated with a degree-$4$ Blaschke product is an ellipse if and only if the Blaschke product is decomposable, \cite[Corollary 13.9]{Gorkin}. 
 
In this paper, our primary focus is on proving Dan Reznik's conjecture. We utilize the fundamental properties of Blaschke 3-ellipses, which are precisely the Poncelet 3-ellipses, and examine the geometric attributes of Blaschke products via this correspondence. Furthermore, we delve into the algebraic and geometric features of specific types of Blaschke products. The paper is structured as follows. Section \ref{sec:reznik-conjecture-proof} states and proves Reznik's conjecture on areas of power circles of Poncelet triangles, mentioned at the beginning of the paper. Section \ref{sec:other-observations} explores some observations on curvatures of Blaschke ellipses and remarks on the geometry of reducible (and semi-reducible) Blaschke products. Lastly, Section \ref{section with concluding remarks} presents some comments, conjectures, and future directions to explore the project further. 

\section{Proof of Reznik's conjecture} \label{sec:reznik-conjecture-proof}
As mentioned in Section \ref{sec:introduction}, the Blaschke $3$-ellipses are precisely the Poncelet $3$-ellipses. We will leverage this fact to prove Dan Reznik's conjecture regarding the combined area of three circles associated with a Poncelet 3-triangle. Additionally, we will show that the sum of the circle areas is solely determined by the foci of the Poncelet ellipse.

\begin{thm}[Reznik's conjecture, \cite{Reznik}] \label{thm triangles} Let $E$ be a Poncelet 3-ellipse whose center coincides with that of the surrounding circle $C$. For each triangle circumscribing $E$ and inscribed in $C$, consider three circles, separately centered at the midpoint of each side of the triangle and passing through the opposite vertex of the triangle. Then the sum of the areas of these three circles is the same for any such triangle, i.e. it is an invariant of the pair $(E, C)$.
\end{thm}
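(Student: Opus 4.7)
The plan is to convert the sum of circle areas into a symmetric function of the three vertices, identify that function with a quantity controlled by the Blaschke correspondence, and then apply Vieta. After rescaling I may assume $C$ is the unit circle $\mathbb{T}$, in which case $E$ is a Blaschke $3$-ellipse centered at the origin with antipodal foci $\pm a$ for some $a\in\mathbb{D}$. By \eqref{formula-B3ellipse}, $E$ is precisely the ellipse attached to the degree-$3$ product $B(z) = \mu\, z\, \sigma_a(z)\, \sigma_{-a}(z)$ for some unimodular $\mu$, and each Poncelet triangle inscribed in $C$ and circumscribing $E$ is a preimage set $B^{-1}(\lambda) = \{z_1, z_2, z_3\}\subset \mathbb{T}$ for some $\lambda\in\mathbb{T}$.

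Next I would rewrite the total area purely in terms of the vertices. The three circles have radii $r_i = |z_i - (z_j + z_k)/2|$ for $(i,j,k)$ a cyclic permutation of $(1,2,3)$, so the total area is $\pi(r_1^2+r_2^2+r_3^2)$. Expanding $|2z_i-z_j-z_k|^2$, using $|z_\ell|^2 = 1$, and invoking $|z_1+z_2+z_3|^2 = 3 + 2\sum_{\ell<m}\operatorname{Re}(z_\ell\bar z_m)$, the cross terms should reorganize into the clean identity
\[
r_1^2 + r_2^2 + r_3^2 \;=\; \tfrac{3}{4}\bigl(9 - |z_1+z_2+z_3|^2\bigr).
\]
The invariance of the total area is thereby reduced to the claim that $|z_1+z_2+z_3|$ is independent of the triangle, equivalently independent of $\lambda$.

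To close, I would read $z_1+z_2+z_3$ off the Blaschke product. Clearing denominators in $B(z) = \lambda$ yields the cubic $\mu z^3 + \lambda \bar a^{\,2} z^2 - \mu a^2 z - \lambda = 0$, whose three roots on $\mathbb{T}$ are the vertices. Vieta immediately gives $z_1+z_2+z_3 = -(\lambda/\mu)\,\bar a^{\,2}$, so $|z_1+z_2+z_3| = |a|^2$, which depends only on the foci of $E$. Substituting back produces the explicit constant $\pi(r_1^2+r_2^2+r_3^2) = \tfrac{3\pi}{4}(9-|a|^4)$.

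The step I anticipate being most delicate is the middle one: each $r_i$ varies individually with $\lambda$, and it is not visually obvious that all of the mixed contributions should collapse into the single quantity $|z_1+z_2+z_3|^2$. Once that identity is in hand, the concentricity hypothesis on $(E,C)$ plays its role through $b=-a$, which eliminates the $a+b$ contribution to the $z^2$-coefficient of the cubic and is precisely what pins $|z_1+z_2+z_3|$ to the constant $|a|^2$ as $\lambda$ sweeps $\mathbb{T}$; without concentricity Vieta would give $z_1+z_2+z_3 = (a+b) + (\lambda/\mu)\bar a\bar b$, whose modulus is visibly not constant.
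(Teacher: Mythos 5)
Your proposal is correct, and it reaches the same constant $\tfrac{3\pi}{4}(9-|a|^4)$ as the paper, but by a genuinely shorter route through the middle of the argument. Both proofs begin identically (normalize to the unit circle, invoke the Poncelet--Blaschke correspondence so that the triangles are the fibers $B^{-1}(\lambda)$ of $B(z)=\mu z\,\sigma_a(z)\sigma_{-a}(z)$, and expand the radii to get $r_1^2+r_2^2+r_3^2=\tfrac{9}{2}-\tfrac{3}{4}\sum_{\ell\neq m} z_\ell\bar z_m$, which is the paper's \eqref{sum of areas}), and both ultimately extract the answer from the $z^2$-coefficient of the cleared-denominator cubic. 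The difference is what happens in between. The paper proves constancy of $A:=\sum_{\ell\neq m}z_\ell\bar z_m$ by an elimination scheme: it evaluates \eqref{eq201} at the two foci to get the modulus identities \eqref{eq101}--\eqref{eq102}, expands and cancels to reach \eqref{eq105a}, and then still needs the relation $z_1z_2z_3=\lambda$ together with Vieta for $z_1+z_2+z_3$ to convert \eqref{eq105a} into a value for $A$. Your observation that $A=|z_1+z_2+z_3|^2-3$ (valid since the $z_\ell$ are unimodular) collapses all of that: a single application of Vieta gives $z_1+z_2+z_3=-(\lambda/\mu)\bar a^{\,2}$, hence $|z_1+z_2+z_3|=|a|^2$ independently of $\lambda$, and the theorem follows. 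What your approach buys is brevity and transparency --- the role of concentricity is laid bare, since for general foci $a,b$ Vieta gives $z_1+z_2+z_3=(a+b)+(\lambda/\mu)\bar a\bar b$, whose modulus is non-constant unless $a+b=0$, which is consistent with the counterexamples in Remark \ref{examples of ellipses not centered at the origin}. What the paper's longer elimination buys, arguably, is that the focal identities \eqref{eq101}--\eqref{eq102} are geometric statements about distances from the foci to the vertices that could conceivably be reused in settings where the first symmetric function alone does not suffice. One small point of care in your write-up: you should note (as the paper does via its WLOG rotation, or simply by observing that replacing $B$ by $\bar\mu B$ only relabels $\lambda$) that the unimodular factor $\mu$ is harmless, and that the identification of \emph{every} circumscribing triangle with some fiber $B^{-1}(\lambda)$ is exactly the converse direction of \cite[Theorem 2.9]{Gorkin}, which you are implicitly using and should cite.
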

Figure \ref{triangles} shows a graph that explains what we are trying to prove in the case where $C$ is the unit circle $\mathbb{T}$.
\begin{itemize}
    \item $\mathbb{T}$ is the unit circle.
    \item $E$ is a Poncelet $3$-ellipse centered at the origin with foci $a_1$ and $a_2$ on the real axis.
    \item The triangle is simultaneously inscribed in the unit circle $\mathbb{T}$ and circumscribing the ellipse $E$.
    \item $z_1$, $z_2$, and $z_3$ are the vertices of the triangle on the unit circle.
    \item $w_1$, $w_2$, and $w_3$ are the points of tangency of the ellipse $E$ and the sides of the triangle.
    \item $n_1$, $n_2$, and $n_3$ are the midpoints of each side of the triangle.
    \item $A_j$ is a circle centered at $n_j$ with radius equal to the length $r_j=|z_j-n_j|$ for $j=1,2,3$.
\end{itemize}
In the proof of Theorem \ref{thm triangles}, we primarily used the elimination method, which involves a system of equations connecting the degree-3 Blaschke product and the Poncelet 3-ellipse. We examine the implications of the Blaschke product in \eqref{eq201} and \eqref{eq202}, which lead to the derivation of two main equations, \eqref{eq101} and \eqref{eq102}. By combining and simplifying these equations, we obtain an equation that depends only on the ellipse's foci. This equation demonstrates that the sum of the area remains constant for a fixed Poncelet $3$-ellipse.

\begin{proof}[Proof of Theorem \ref{thm triangles}]
    Let $E$ be a Poncelet 3-ellipse whose center coincides with that of the surrounding circle $C$. By applying a translation and a dilation, which will simply change all areas by a constant factor, we may assume without loss of generality that $C$ is the unit circle $\mathbb{T}$ and that $E$ is centered at the origin with major and minor axes lying along the $x$ and $y$ axes of the plane respectively. Let $B$ be the corresponding degree-$3$ Blaschke product with zeros located at $0$, $a_1$, and $a_2$. $a_1$ and $a_2$ will be the foci of the ellipse $E$, and, under our assumption, they will be real. Then for each $\lambda \in \mathbb{T}$, there are exactly $3$ distinct solutions to $B(z)=\lambda$ and these solutions lie on the unit circle $\mathbb{T}$, \cite[Lemma 3.4]{Gorkin}. Moreover, if $\lambda \in \mathbb{T}$ and the three points $B$ sends to $\lambda$ are $z_{1}, z_{2}$, and $z_{3}$, then, as in \cite[pg.39]{Gorkin},
    \begin{align}\label{eq201}
    B(z)-\lambda=\frac{\left(z-z_{1}\right)\left(z-z_{2}\right)\left(z-z_{3}\right)}{\left(1-\overline{a_{1}} z\right)\left(1-\overline{a_{2}} z\right)},
    \end{align}
    and evaluating equation \eqref{eq201} at $a_{j}$ for $j=1,2$  yields
    \begin{align}\label{eq202}
    1=|\lambda|=\left|\frac{\left(a_{j}-z_{1}\right)\left(a_{j}-z_{2}\right)\left(a_{j}-z_{3}\right)}{\left(1-\left|a_{j}\right|^{2}\right)\left(1 - a_j \overline{a_{j \pm 1}}\right)}\right|.
    \end{align}
    The sum of the areas $A_1$, $A_2$, and $A_3$ is
    \[A_1+A_2+A_3=\pi r_1^{2}+\pi r_{2}^{2}+\pi r_{3}^{2}
    =\pi\left(r_{1}^{2}+r_{2}^{2}+r_{3}^{2}\right).\]
    Let's rewrite the sum of the areas in terms of the vertices $z_1$, $z_2$, and $z_3$ of the triangle.
    \begin{align*}
    r_1=\left|z_{1}-\frac{1}{2}\left(z_{2}+z_{3}\right)\right|,\ 
    r_{2}=\left|z_{2}-\frac{1}{2}\left(z_{1}+z_{3}\right)\right|, \
    r_{3}=\left|z_{3}-\frac{1}{2}\left(z_{1}+z_{2}\right)\right|.    
    \end{align*}
    Note that the vertices $z_1$, $z_2$, and $z_3$ are on the unit circle. So $|z_1|=|z_2|=|z_3|=1$ and
    \begin{align*}
    r_3^2 &=\frac{3}{2}-\frac{1}{2}\left(\bar{z}_{1} z_{3}+z_{1} \bar{z}_{3}+\bar{z}_{2} z_{3}+z_{2} \bar{z}_{3}\right)+\frac{1}{4}\left(\bar{z}_{1} z_{2}+z_{1} \bar{z_{2}}\right),
    \end{align*}
    \begin{align*}
    r_{2}^{2}&=
    \frac{3}{2}-\frac{1}{2}\left(\bar{z}_{1} z_{2}+z_{1} \bar{z}_{2}+\bar{z}_{2} z_{3}+z_{2} \bar{z}_{3}\right)+\frac{1}{4}\left(\bar{z}_{1} z_{3}+z_{1} \bar{z}_{3}\right),
    \end{align*}
    \begin{align*}
    r_{1}^{2}&=
    \frac{3}{2}-\frac{1}{2}\left(\bar{z}_{1} z_{2}+z_{1} \bar{z}_{2}+z_{1} \bar{z}_{3}+\bar{z}_{1} z_{3}\right)+\frac{1}{4}\left(\bar{z}_{2} z_{3}+z_{2} \bar{z}_{3}\right).
    \end{align*}
    Thus,
    \begin{align}\label{sum of areas}
    r_{1}^{2}+r_{2}^{2}+r_{3}^{2}=\frac{9}{2}-\frac{3}{4}\left(\bar{z_{1}} z_{2}+z_{1} \bar{z}_{2}+\bar{z}_{2} z_{3}+z_{2} \overline{z}_{3}+\bar{z}_{1} z_{3}+z_{1} \bar{z}_{3}\right).   
    \end{align}
    To conclude that the sum of the areas $A_1$, $A_2$, and $A_3$ is invariant, it suffices to show that the expression $\left(\bar{z_{1}} z_{2}+z_{1} \bar{z}_{2}+\bar{z}_{2} z_{3}+z_{2} \overline{z}_{3}+\bar{z}_{1} z_{3}+z_{1} \bar{z}_{3}\right)$ in \eqref{sum of areas} is constant as the vertices vary over all circumscribing triangles.

    Let $a_1$ and $a_2$ be the two foci of the ellipse $E$, and assume without loss of generality that $a_1>a_2$. Because $E$ is centered at the origin and $a_1$ and $a_2$ are on the real axes, we have $a_1=\overline{a_1}$, $a_2=\overline{a_2}$, and $a_1=-a_2$.
    
    For $j=1, 2$ in \eqref{eq202} with $a_2=-a_1$ we have
    \begin{align}
      \left|\left(z_{1}-a_{1}\right)\left(z_{2}-a_{1}\right)\left(z_{3}-a_{1}\right)\right|^2&=\left(1-a_{1}^{2}\right)^{2}\left(1+a_{1}^{2}\right)^{2}\label{eq101}\\
     \left|\left(z_{1}+a_{1}\right)\left(z_{2}+a_{1}\right)\left(z_{3}+a_{1}\right)\right|^2&=\left(1-a_{1}^{2}\right)^{2}\left(1+a_{1}^{2}\right)^{2}\label{eq102}   
    \end{align}
    Expanding the left-hand-sides of equations \eqref{eq101} and \eqref{eq102} we obtain
    \begin{align}\label{eq103}
    \nonumber&\left(1+a_{1}^{2}\right)^{3}-a_{1}\left(1+a_{1}^{2}\right)^{2}\left(z_{1}+\bar{z}_{1}\right)-a_{1}\left(1+a_{1}^{2}\right)^{2}\left(z_{2}+\bar{z}_{2}\right)\\
    \nonumber&-a_{1}\left(1+a_{1}^{2}\right)\left(z_{3}+\bar{z}_{3}\right)
    +a_{1}^{2}\left(1+a_{1}^{2}\right)\left(z_{1}+\bar{z}_{1}\right)\left(z_{2}+\bar{z}_{2}\right)\\
    \nonumber&+a_{1}^{2}\left(1+a_{1}^{2}\right)\left(z_{1}+\bar{z}_{1}\right)\left(z_{3}+\bar{z}_{3}\right)+a_{1}^{2}\left(1+a_{1}^{2}\right)\left(z_{2}+\bar{z}_{2}\right)\left(z_{3}+\bar{z}_{3}\right)\\
    &-a_{1}^{3}\left(z_{1}+\bar{z}_{1}\right)\left(z_{2}
    +\bar{z}_{2}\right)\left(z_{3}+\bar{z}_{3}\right)    
    \end{align}
    and
    \begin{align}\label{eq104}
    \nonumber&\left(1+a_{1}^{2}\right)^{3}+a_{1}\left(1+a_{1}^{2}\right)^{2}\left(z_{1}+\bar{z}_{1}\right)+a_{1}\left(1+a_{1}^{2}\right)^{2}\left(z_{2}+\bar{z}_{2}\right)\\
    \nonumber&+a_{1}\left(1+a_{1}^{2}\right)\left(z_{3}+\bar{z}_{3}\right)+a_{1}^{2}\left(1+a_{1}^{2}\right)\left(z_{1}+\bar{z}_{1}\right)\left(z_{2}+\bar{z}_{2}\right)\\
    \nonumber&+a_{1}^{2}\left(1+a_{1}^{2}\right)\left(z_{1}+\bar{z}_{1}\right)\left(z_{3}+\bar{z}_{3}\right)+a_{1}^{2}\left(1+a_{1}^{2}\right)\left(z_{2}+\bar{z}_{2}\right)\left(z_{3}+\bar{z}_{3}\right)\\
    &+a_{1}^{3}\left(z_{1}+\bar{z}_{1}\right)\left(z_{2}+\bar{z}_{2}\right)\left(z_{3}+\bar{z}_{3}\right).
    \end{align}
    
    The right-hand side expressions of equations \eqref{eq101} and \eqref{eq102} are identical, so the left-hand sides of equations \eqref{eq103} and \eqref{eq104} are equal. More specifically, 
    \begin{align}
    \nonumber a_{1}\left(1+a_{1}^{2}\right)^{2}\left(z_{1}+\bar{z}_{1}\right)+a_{1}&\left(1+a_{1}^{2}\right)^{2}\left(z_{2}+\bar{z}_{2}\right)+a_{1}\left(1+a_{1}^{2}\right)\left(z_{3}+\bar{z}_{3}\right)\\
    &+a_{1}^{3}\left(z_{1}+\bar{z}_{1}\right)\left(z_{2}+\bar{z}_{2}\right)\left(z_{3}+\bar{z}_{3}\right)=0.    
    \end{align}
    Therefore, the remaining terms in \eqref{eq103} and \eqref{eq104} sum to $\left(1-a_{1}^{2}\right)^{2}\left(1+a_{1}^{2}\right)^{2}$. That is,
    \begin{align}\label{eq105}
    \nonumber \left(1+a_{1}^{2}\right)^{3}&+a_{1}^{2}\left(1+a_{1}^{2}\right)\left(z_{1}+\bar{z}_{1}\right)\left(z_{2}+\bar{z}_{2}\right)+a_{1}^{2}\left(1+a_{1}^{2}\right)\left(z_{1}+\bar{z}_{1}\right)\left(z_{3}+\bar{z}_{3}\right)\\
    &+a_{1}^{2}\left(1+a_{1}^{2}\right)\left(z_{2}+\bar{z}_{2}\right)\left(z_{3}+\bar{z}_{3}\right)=\left(1-a_{1}^{2}\right)^{2}\left(1+a_{1}^{2}\right)^{2}.    
    \end{align}
    One can then simplify \eqref{eq105} immediately by cancelling one of the $(1+a_1^2)$, subtracting the remaining $(1+a_1^2)^2$ from both sides, and cancelling an $a_1^2$. From this, it follows that
    \begin{align}\label{eq105a}
        (z_1+\overline{z}_1)(z_2+\overline{z}_2)+(z_1+\overline{z}_1)(z_3+\overline{z}_3)+(z_2+\overline{z}_2)(z_3+\overline{z}_3)=-3+a_1^4-2a_1^2.
    \end{align}
    Recall that we need to compute $\bar{z_{1}} z_{2}+z_{1} \bar{z}_{2}+\bar{z}_{2} z_{3}+z_{2} \overline{z}_{3}+\bar{z}_{1} z_{3}+z_{1} \bar{z}_{3}=:A$ in \eqref{sum of areas}.
    However, notice that \eqref{eq105a} can be written as 
    \begin{align}
    z_1z_2+\overline{z}_1\overline{z}_2+z_1z_3+\overline{z}_1\overline{z}_3+z_2z_3+\overline{z}_2\overline{z}_3+A=-3+a_1^4-2a_1^2.
    \end{align}
    Using equation \eqref{eq201} and evaluating at $0$, we obtain $z_1z_2z_3=\lambda$. Therefore, $z_1=\lambda \overline{z}_2\overline{z}_3$ and similarly for $z_2$ and $z_3$. This leads to 
    \begin{align}\label{eq105b}    z_1z_2+\overline{z}_1\overline{z}_2+z_1z_3+\overline{z}_1\overline{z}_3+z_2z_3+\overline{z}_2\overline{z}_3=\lambda(\overline{z}_1+\overline{z}_2+\overline{z}_3)+\overline{\lambda}(z_1+z_2+z_3).
    \end{align}
    Let's find what is $(z_1 + z_2 + z_3)$ in equation \eqref{eq105b}. Based on equation \eqref{eq201}, $(z_1 + z_2 + z_3)$ represents the negative of the coefficient of $z^2$ when expanding $(z-z_1)(z-z_2)(z-z_3)$. However $a_2=-a_1$, so 
    \begin{align*}
        (B(z)-\lambda)(1-\overline{a}_1^2z^2)=(z-z_1)(z-z_2)(z-z_3).
    \end{align*}
    Since $a_1$ is real and $z^2$ doesn't appear in the numerator of 
    \begin{align*}
        B(z)=\frac{z(z-a_1)(z+a_1)}{(1-\overline{a}_1^2z^2)}=\frac{z(z^2-a_1^2)}{(1-\overline{a}_1^2z^2)}
    \end{align*}
    the coefficient is $\lambda a_1^2$; that is, $z_1z_2+\overline{z}_1\overline{z}_2+z_1z_3+\overline{z}_1\overline{z}_3+z_2z_3+\overline{z}_2\overline{z}_3=-2a_1^2$. So $A=-3+a_1^4$. Substituting this into \eqref{sum of areas} yields
    \begin{align*}
        r_1^2+r_2^2+r_3^2=\frac{9}{2}-\frac{3}{4}(a_1^4-3)
    \end{align*}
    which is a constant depending only on the foci of the ellipse, thus proving the theorem.
\end{proof}

\begin{rmk}\label{examples of ellipses not centered at the origin}
Theorem \ref{thm triangles} for an arbitrary Poncelet $3$-ellipse is false. 
For example, consider the ellipse formulated as 
\[\left|w-\left(\frac{\sqrt{2}}{3}-\frac{1}{3}i\right)\right|+\left|w-\left(-\frac{\sqrt{2}}{3}-\frac{1}{3}i\right) \right| = \frac{2}{\sqrt{3}}.\] 
When the Poncelet triangles change, the sum of the areas of the three circles also changes. This is demonstrated in the applet ``Power circles of Poncelet 3-periodics with changing total area"\footnote{Created with GeoGebra, by Dianlun Luo, {\tt https://www.geogebra.org/m/nvvramgf}}.\\
On the other hand, one can observe some examples of Poncelet $3$-ellipses that when one of the ellipse foci is at the origin, the sum of the areas $A_1$, $A_2$, and $A_3$ is still invariant, such as
$\left|w\right|+\left|w-0.5 \right| = 1$ and $\left|w\right|+\left|w-0.1 \right| = 1$.
This is demonstrated in the applet ``Poncelet 3-Ellipse (one foci at the origin)"\footnote{Created with GeoGebra, by Dianlun Luo,  {\tt https://www.geogebra.org/m/t7tcwtyf}}.

We propose the following question: can the result of Theorem \ref{thm triangles} be generalized to Poncelet $n$-ellipses (in other words, ellipses inscribed in $n$-gons which are themselves inscribed in $\T$), and under what conditions?
\end{rmk}

\section{Other observations on geometry and decomposition of Blaschke products}\label{sec:other-observations}
\subsection{Analysis of curvature properties of Blaschke 3- and 4-ellipses}

In this section, we first focus on the geometric features of degree-$3$ and degree-$4$ Blaschke products. In particular, we utilize the geometric connection between Poncelet ellipses and Blaschke products mentioned in Section \ref{sec:introduction} to explore geometric properties of Blaschke ellipses.

The curvature function $\kappa$ of Blaschke $3$-ellipses and Blaschke $4$-ellipses with the same foci is formulated as
\begin{align}\label{curveture34}
\kappa(t) = \frac{Mm}{4((\frac{M}{2})^{2}\sin(t + \theta)^{2} + (\frac{m}{2})^{2}\cos(t + \theta)^{2})^{\frac{3}{2}}},
\end{align} 
where $M$ is the length of the major axis, $m$ is the length of the minor axis, and $\theta$ is the angle between the major axis and the $x$-axis. 
The equation for an ellipse that is centered at the origin and parallel to the $x$-axis, with $M$ as the length of the major axis and $m$ as the length of the minor axis, is given by 
$4x^{2}/M^{2} + 4y^{2}/m^{2} = 1.$
By parameterizing this ellipse as $a(t)=(x(t),y(t))=\left(M(e^{it}+e^{-it})/4,
m(e^{it}-e^{-it})/(4i)\right)$ where $0 \le t \le 2\pi$, then rotating the parametrized form by an angle $\theta$ and using well-known formulas for the curvature of a smooth curve, we arrive at the general expression in \eqref{curveture34}.
The general formulation for the curvature $\kappa$ in \eqref{curveture34} (as well as the eccentricity $e = \sqrt{1 - |a_{2} - a_{1}|^{2}/M^{2}}$, where $a_1$, $a_2$ are the foci of the ellipse) of a degree-$3$ and a degree-$4$ Blaschke ellipse is the same except for the major and minor axes' expressions for length.

The following example compares the curvature functions of two ellipses with the same foci, which are the Blaschke ellipses of a degree-$3$ Blaschke product and degree-$4$ Blaschke product respectively.
\begin{eg}\label{curvature-compare}
\begin{figure}[htp]
        \centering
        \includegraphics[width=10cm]{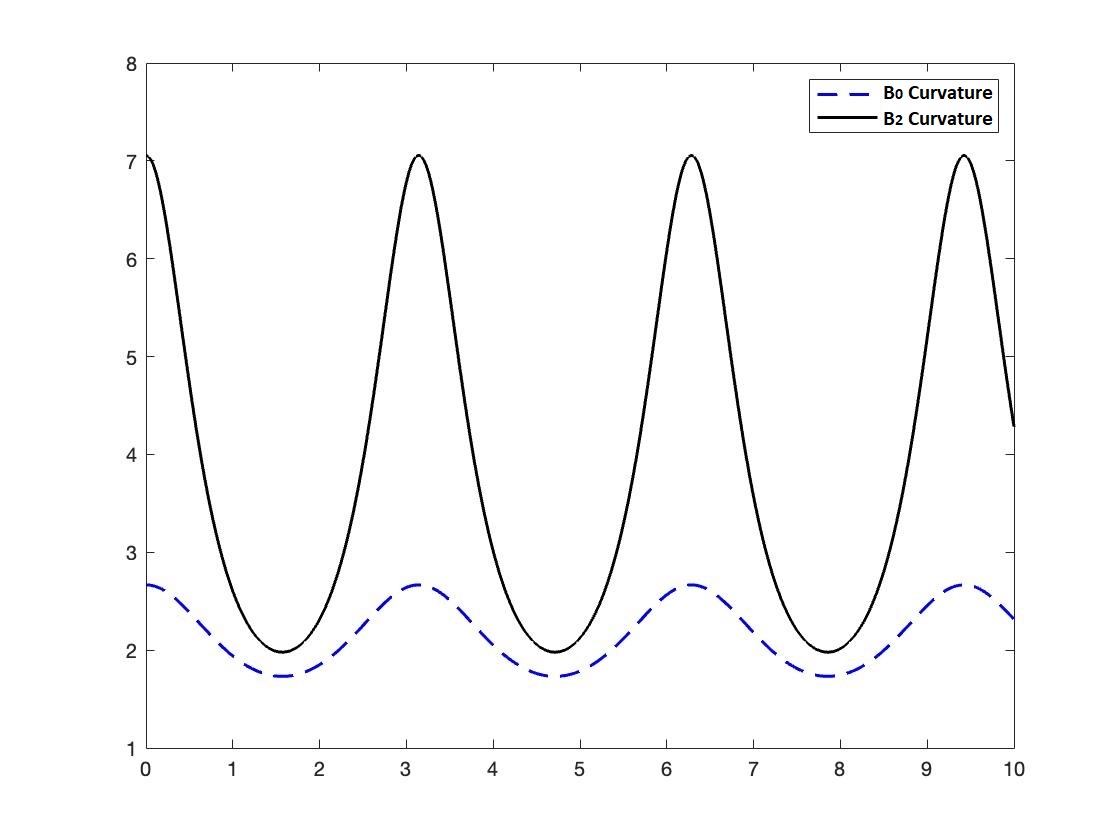}
        \caption{The curvature functions of the degree-$3$ Blaschke ellipse and degree-$4$ Blaschke ellipse (blue dashed line and solid black line, respectively).}
    \label{Curvetures34B}
\end{figure}
Consider a Blaschke $3$-ellipse generated by $B_{0}(z)=z^{2}\left(\frac{z-0.5}{1-0.5z}\right)$. Its foci are $a_1=0$, $a_2=0.5$, its major and minor axis lengths are $M=|1-\bar{a_{1}}a_{2}|=|1-0\cdot0.5| = 1$, $m =\sqrt{M^2 - |a_2 - a_1|^{2}}=\sqrt{1 - 0.25} = \frac{\sqrt{3}}{2}$, and the angle between the major axis and the $x$-axis is $\theta= \arccos\left(\frac{0.5 - 0}{|0.5 - 0|}\right) = 0$.
Therefore, by \eqref{curveture34} the curvature of this Blaschke $3$-ellipse is 
$\kappa(t) = \frac{\sqrt{3}}{(\sin(t)^{2} + \frac{3}{4}\cos(t)^{2})^{\frac{3}{2}}}.$

On the other hand, let $B_{1}(z) = z\left(\frac{z-0.5}{1-0.5z}\right)^{2}$, a degree-3 Blaschke product with zeros $a = 0.5$, $b = 0.5$, and $c = 0$. Let also $B_{2}(z)=z B_1(z) = z^{2}\left(\frac{z-0.5}{1-0.5z}\right)^{2}$, a degree-$4$ Blaschke product. Observe that $B_{2}(z)$ can be written as the composition of two normalized degree-$2$ Blaschke products $B_2(z)=C(D(z))$, where $C(z)= z^{2}$ and $D(z) = z(\frac{z - 0.5}{1 - 0.5z})$. Then, according to Theorem 3.6 in \cite{GorkinWagner2017}, the Blaschke curve associated with $B_2$ forms an ellipse with foci $b = 0$ and $c = 0.5$. The major and minor axis lengths of this ellipse are $M=|1-0.5|\sqrt{\frac{2 - |0|^{2} - |0.5|^{2}}{1 - |0|^{2}|0.5|^{2}}} = \frac{\sqrt{7}}{4}$ and $m =\sqrt{(\frac{\sqrt{7}}{4})^{2} - |0.5 - 0|^{2}} = \frac{\sqrt{3}}{4}$, and the angle between the major axis and the $x$-axis is $\theta = \arccos\left(\frac{0.5 - 0}{|0.5 - 0|}\right) = 0$. By \eqref{curveture34}, the curvature of this Blaschke $4$-ellipse is 
$\kappa(t)=\frac{8\sqrt{21}}{(7\sin(t)^{2} + 3\cos(t)^{2})^{\frac{3}{2}}}$.

The graphs of the curvature functions of the Blaschke ellipses of $B_{0}$ and $B_{2}$ are displayed in Figure \ref{Curvetures34B}. Notice that although the extreme values of these functions coincide because both ellipses have the same foci, the curvature function of the Blaschke $3$-ellipse is different than that of the Blaschke $4$-ellipse in values, rate of change, and shape.
\end{eg}

\begin{rmk}\label{upper-lower bound}
The upper and lower bounds of the curvature of a Blaschke-$3$ ellipse depend on the major and minor axis lengths. Since the size of the axes depends on the ellipse's foci, the upper and lower bounds of the curvature function thus depend on the zeros of the Blaschke product. One can rewrite the denominator of $\kappa(t)$ in \eqref{curveture34} as
\[4\left(\left(\frac{M}{2}\right)^{2} - \left(\left(\frac{M}{2}\right)^{2} - \left(\frac{m}{2}\right)^{2}\right)\cos^2(t + \theta)\right)^{3/2}.\]
This can be bounded above by $\frac{M^{3}}{2}$ since $m \leq M$, to obtain
\[\kappa(t)\geq \frac{Mm}{\frac{M^{3}}{2}} = \frac{2m}{M^{2}}.\]
Similarly, one can rewrite the denominator of $\kappa(t)$ as
\[4\left(\left(\frac{m}{2}\right)^{2} + \left(\left(\frac{M}{2}\right)^{2} - \left(\frac{m}{2}\right)^{2}\right)\sin^2(t + \theta)\right)^{3/2}\] 
which can be bounded below by $\frac{m^{3}}{2}$ since $m \leq M$. This yields
\[\kappa(t)\leq \frac{Mm}{\frac{m^{3}}{2}} = \frac{2M}{m^{2}}.\]
\end{rmk}

The following example shows that the upper and lower bounds of the Blaschke $3$-ellipse in Remark \ref{upper-lower bound} curvature are attainable.

\begin{eg}
Consider the ellipse generated by the degree-$3$ Blaschke product from Example \ref{curvature-compare},  $B_{0}(z)=z^{2}\left(\frac{z-0.5}{1-0.5z}\right)$. The curvature function of this ellipse is 
$\kappa(t)= \frac{\sqrt{3}}{(\sin(t)^{2} + \frac{3}{4}\cos(t)^{2})^{\frac{3}{2}}}$.
Using techniques from calculus, one find that the absolute minimum and maximum values are $\sqrt{3}$ and $8/3$, consecutively. These are exactly the lower and upper bounds asserted in Remark \ref{upper-lower bound}.
\end{eg}

\subsection{Decomposition of reducible Blaschke products}
We also explored the decomposition of reducible Blaschke products, which are conjugations of power functions by disk automorphisms (which are themselves the simplest examples of Blaschke products).

\begin{defn}\label{def-reducible}
A degree-$n$ Blaschke product $B$ is said to be \textbf{reducible} if a normalized automorphism $\sigma_a(z)=\frac{z-a}{1-\overline{a}{z}}$ exists, such that 
\[B=\sigma_a\circ h \circ \sigma_a^{-1}\]
where $h(z)=z^n$.
In this case, $\sigma_a$ is called the conjugate factor of $B$.
\end{defn}

Pre-composing or post-composing a Blaschke product with a disk automorphism results in a Blaschke product with the same degree.
Consider a degree-$n$ Blaschke product $B(z)=\prod_{i=1}^n\sigma_{a_j}(z)$ and a disk automorphism  $\sigma_a(z)=\frac{z-a}{1-\overline{a}z}$. 
The compositions $B \circ \sigma_a$ and $\sigma_a\circ B$ are Blaschke products of the same degree as $B$:
both are holomorphic on a neighborhood of $\overline{\mathbb{D}}$ and map unit circle to unit circle. Moreover, $B\circ\sigma_a$ has zeros $\{\sigma_a^{-1}(a_j)\}_{j=1}^{n}$, and $\sigma_a\circ B$ has all $n$ preimages of $a$ under $B$ as zeros.

We will denote the set of $n$-th roots of a complex number $a$ by $\Sigma_{a}^n:=\{\zeta_n^jf(a)\}_{j=1}^n$
where $f$ is the principal branch of the $n$-th root on $\mathbb{C}\setminus\mathbb{R}_{-}$ and $\zeta_n$ is a primitive $n$-th root of unity. We also say that a set of points is \textit{concyclic} if they are on the same circle.
\begin{prop}\label{zeros-reducible-B}
    Let $B$ be a degree-$n$ reducible Blaschke product with conjugate factor $\sigma_a$. Then the set of zeroes of $B$ is $\sigma_a(\Sigma_a^n)$. In particular, the zeroes of $B$ are concyclic and distinct as long as $B\neq h$.
\end{prop}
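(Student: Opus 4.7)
The plan is to unpack the defining identity $B=\sigma_a\circ h\circ\sigma_a^{-1}$, read off $B^{-1}(0)$ directly, and then invoke the fact that M\"obius transformations send generalized circles to generalized circles in order to obtain the geometric conclusions.

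First I would compute the zero set. Since $\sigma_a$ is a disk automorphism with $\sigma_a(a)=0$, the equation $B(z)=0$ is equivalent to $h(\sigma_a^{-1}(z))=a$, i.e.\ $(\sigma_a^{-1}(z))^n=a$. Writing $w=\sigma_a^{-1}(z)$, the solutions $w$ are exactly the $n$-th roots of $a$, so $w\in\Sigma_a^n$ and hence $z\in\sigma_a(\Sigma_a^n)$. Because $\sigma_a$ is a bijection of $\overline{\mathbb{D}}$ onto itself, this picks out every zero of $B$.

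For the geometric assertions, I would first observe that under the hypothesis $B\neq h$ one must have $a\neq 0$: indeed, if $a=0$ then $\sigma_a$ is the identity map and $B=h$. With $a\neq 0$ in hand, $\Sigma_a^n$ consists of $n$ distinct points lying on the Euclidean circle of radius $|a|^{1/n}$ centered at the origin. Since $\sigma_a$ is a M\"obius transformation it carries generalized circles to generalized circles, and because this circle lies strictly inside $\mathbb{D}$ the image is again an honest circle; injectivity of $\sigma_a$ then transports the $n$ distinct roots to $n$ distinct points lying on that image circle, giving both concyclicity and distinctness.

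The argument is essentially routine; the only mild obstacle I anticipate is keeping track of the degenerate case $a=0$, where all $n$ zeros collapse to the origin so distinctness fails and concyclicity becomes vacuous. This is exactly the case excluded by the hypothesis $B\neq h$, so the argument carries through cleanly.
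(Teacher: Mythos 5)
Your argument is correct and follows essentially the same route as the paper's proof: unpack $B=\sigma_a\circ h\circ\sigma_a^{-1}$ to identify the zero set as $\sigma_a(\Sigma_a^n)$, then use the circle-preserving property and injectivity of the M\"obius map $\sigma_a$ (together with the observation that the image cannot be a line since it stays in $\mathbb{D}$) to get concyclicity and distinctness when $a\neq 0$. Your explicit handling of the degenerate case $a=0$ matches the paper's caveat ``as long as $B\neq h$.''
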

\begin{proof}
Let $z$ be a zero of $B=\sigma_a\circ h\circ\sigma_a^{-1}$. Then $h\circ\sigma_a^{-1}(z)=a$, so $\sigma_a^{-1}(z)\in h^{-1}(a)$. Since we have $h^{-1}(a)=\Sigma_a^n$, we get that $z\in\sigma_a(\Sigma_a^n)$.

Conversely, any element $w\in\sigma_a(\Sigma_a^n)$ can be written as $\sigma_a(\zeta_n^jf(a))$, with $f$ as in the paragraph before Proposition \ref{zeros-reducible-B}, for some $j$. Then $B(w)=(\sigma_a\circ h\circ\sigma_a^{-1})(\sigma_a(\zeta_n^jf(a)))=\sigma_a(a)=0$.
Since $\Sigma_a^n$ is a subset of a circle, we must have that $\sigma_a(\Sigma_a^n)$ is a subset of a circle on the Riemann sphere. However, $\sigma_a(\Sigma_a^n)$ cannot be a line since $\sigma_a(\mathbb{D})=\mathbb{D}$. Thus $\sigma_a(\Sigma_a^n)$ lies on a circle in $\mathbb{C}$ as required. Since $\Sigma_a^n$ has $n$ distinct elements as long as $a\neq 0$, we conclude the last point by injectivity of $\sigma_a$.
\end{proof}

We remark that in this setup, the preimage of $0$ is not unusual: the same argument could be applied to show that the preimage of any point under a reducible Blaschke product must be a concyclic set. We cannot hope for this to be a sufficient condition to characterize reducible Blaschke products, however, for this is true in the case of every degree-$3$ product.

After looking at the zeroes of a reducible Blaschke product, we may turn to analyzing its critical points.
\begin{prop}
     Let $B(z)\neq z$ be a reducible Blaschke product with conjugate factor $\sigma=\sigma_a$. Then $B'(z)=0$ iff $z=-a$, and $-a$ is a fixed point of $B$.
\end{prop}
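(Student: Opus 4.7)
The plan is to apply the chain rule to $B=\sigma_a\circ h\circ\sigma_a^{-1}$ and exploit the fact that disk automorphisms have nonvanishing derivative. Concretely, I would write
\[
B'(z) = \sigma_a'\!\bigl(h(\sigma_a^{-1}(z))\bigr)\cdot h'\!\bigl(\sigma_a^{-1}(z)\bigr)\cdot (\sigma_a^{-1})'(z).
\]
Since $\sigma_a(w) = (w-a)/(1-\overline{a}w)$ is a M\"obius transformation, a direct computation gives $\sigma_a'(w) = (1-|a|^2)/(1-\overline{a}w)^2$, which is nowhere zero on $\mathbb{D}$, and likewise for $(\sigma_a^{-1})'$. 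Hence the first and third factors above never vanish, so the location of the zeros of $B'$ is governed entirely by the middle factor.

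Next I would observe that the hypothesis $B(z)\neq z$ forces $n\geq 2$: if $n=1$, then $h=\mathrm{id}$ and $B=\sigma_a\circ\mathrm{id}\circ\sigma_a^{-1}=\mathrm{id}$. With $n\geq 2$, we have $h'(w) = nw^{n-1}$, which vanishes if and only if $w=0$. Therefore $B'(z)=0$ iff $\sigma_a^{-1}(z)=0$, i.e. iff $z=\sigma_a(0)$. Computing $\sigma_a(0)=(0-a)/(1-0)=-a$ gives the first claim.

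For the fixed point assertion, I would simply chase $-a$ through the definition of $B$:
\[
B(-a) = \sigma_a\!\bigl(h(\sigma_a^{-1}(-a))\bigr) = \sigma_a(h(0)) = \sigma_a(0) = -a,
\]
using $h(0)=0^n=0$ (valid since $n\geq 1$) and $\sigma_a^{-1}(-a)=0$, which follows from $\sigma_a(0)=-a$ already computed.

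I do not anticipate any real obstacle; the only subtlety is checking that the degenerate case $n=1$ is genuinely excluded by the assumption $B\neq z$, and that the identities $\sigma_a(0)=-a$ and $\sigma_a^{-1}(-a)=0$ are used consistently. Everything else is a one-line chain-rule argument plus a direct verification.
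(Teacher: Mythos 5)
Your proof is correct, and the first half (the chain rule applied to $B=\sigma_a\circ h\circ\sigma_a^{-1}$, with the observation that the automorphism factors have nonvanishing derivative so everything reduces to $h'(\sigma_a^{-1}(z))=n\,\sigma_a^{-1}(z)^{n-1}$) is exactly the paper's argument; you are in fact slightly more careful than the paper in noting explicitly that $B\neq z$ rules out $n=1$, which is needed for $h'$ to have a zero at all. The only divergence is in the fixed-point claim: you verify $B(-a)=-a$ by direct evaluation through the conjugation, using $\sigma_a^{-1}(-a)=0$, $h(0)=0$, $\sigma_a(0)=-a$, whereas the paper compares the zero sets of the two sides of $\sigma_a^{-1}\circ B=h\circ\sigma_a^{-1}$ to conclude $B^{-1}(-a)=\{-a\}$. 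Your route is more elementary and suffices for the stated claim; the paper's route buys the slightly stronger fact that $-a$ is the \emph{unique} preimage of itself under $B$, i.e.\ a totally invariant point, though the proposition does not actually assert this.
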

\begin{proof}
For the first claim, by the chain rule we have 
$$B'(z)=(\sigma'\circ h\circ\sigma^{-1})(z) \cdot (h'\circ\sigma^{-1})(z) \cdot (\sigma^{-1})'(z)$$
Because $\sigma$ is biholomorphic, it in particular does not admit any critical points and thus $(\sigma^{-1})'$ and $\sigma'\circ h\circ\sigma^{-1}$ have no zeroes. We have $h'(\sigma^{-1}(z))=n\sigma^{-1}(z)^{n-1}$, and this is zero iff $\sigma^{-1}(z)=0$, or equivalently if $z=-a$.

For the second claim, we have $\sigma_a^{-1}\circ B = h\circ\sigma_a^{-1}$. The zeroes of the left hand side are $B^{-1}(-a)$ while the zeroes of the right hand side are only $-a$, with multiplicity $n$. Thus $B^{-1}(-a)=\{-a\}$, showing that $-a$ is a fixed point.
\end{proof}

For a reducible Blaschke product $B$ of even degree, the conjugate factor can be related to the zeros of $B$ in terms of hyperbolic geometry. Recall that in the Poincar\'e disk, hyperbolic geodesics are circular arcs (resp. line segments) meeting the unit circle at right angles.
\begin{prop} \label{prop:reducible-blaschke-hyperbolic-geodesics}
    Let $B$ be a reducible Blaschke product of degree $2k$, $k\in\mathbb{N}$, with conjugate factor $\sigma_a$. Then $-a$ lies at the intersection of the hyperbolic geodesics through opposite pairs of zeroes, and the unsigned intersection angle of geodesics through consecutive pairs of roots of $B$ is identical to the unsigned intersection angle of geodesics through consecutive elements of $\Sigma_a^{2k}$.
\end{prop}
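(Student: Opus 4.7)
The plan is to exploit two standard properties of the disk automorphism $\sigma_a$: it is a hyperbolic isometry of $\mathbb{D}$ (and therefore maps hyperbolic geodesics to hyperbolic geodesics) and a M\"obius transformation of $\mathbb{C}$ (and therefore conformal, preserving unsigned angles). Combined with Proposition \ref{zeros-reducible-B}, which identifies the zeros of $B$ as $\sigma_a(\Sigma_a^{2k})$, this should deliver both assertions with no heavy computation.

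For the first claim, I would begin by noting that the $2k$ elements of $\Sigma_a^{2k}$ are the vertices of a regular polygon inscribed in a Euclidean circle centered at the origin. Because the degree is even, these roots pair up antipodally: for $w \in \Sigma_a^{2k}$ the opposite root is $-w$, since $-\zeta_{2k}^{j}f(a)=\zeta_{2k}^{j+k}f(a)$. The Euclidean segment joining such a pair $w, -w$ is a diameter of $\mathbb{D}$ through $0$, and diameters of $\mathbb{D}$ are exactly the hyperbolic geodesics passing through the center of the Poincar\'e disk. Applying the hyperbolic isometry $\sigma_a$ then produces $k$ hyperbolic geodesics, each joining a pair of opposite zeros of $B$, all passing through the common image $\sigma_a(0)=-a$, which proves the first assertion.

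For the angle statement, I would use the isometry property again: $\sigma_a$ sends the hyperbolic geodesic joining any $u, v \in \Sigma_a^{2k}$ to the hyperbolic geodesic joining $\sigma_a(u)$ and $\sigma_a(v)$. Because $\sigma_a$ is an orientation-preserving homeomorphism of $\mathbb{D}$ that maps the source circle containing $\Sigma_a^{2k}$ onto the target circle containing the zeros of $B$ (Proposition \ref{zeros-reducible-B}), cyclic order is preserved, so "consecutive" transfers unambiguously across the bijection. Conformality of the M\"obius transformation $\sigma_a$ then preserves unsigned angles at any intersection point, so the angle between two consecutive side-geodesics at their shared vertex matches on both sides.

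The main obstacle, such as it is, is bookkeeping rather than analysis: one needs to make precise that the notions of "opposite pair" and "consecutive" are well-defined on the target side and correspond to the analogous notions on $\Sigma_a^{2k}$ through the bijection induced by $\sigma_a$. This is immediate once one appeals to $\sigma_a$ being an orientation-preserving homeomorphism together with Proposition \ref{zeros-reducible-B}. After this setup, both conclusions follow in one step from the dual nature of $\sigma_a$ as a hyperbolic isometry and a conformal map.
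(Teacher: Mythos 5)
Your proposal is correct and follows essentially the same route as the paper's proof: both identify opposite zeros as the images under $\sigma_a$ of antipodal points $\omega,-\omega\in\Sigma_a^{2k}$, observe that the diameter through such a pair is a hyperbolic geodesic through $0$ which $\sigma_a$ carries to a geodesic through $\sigma_a(0)=-a$, and deduce the angle statement from conformality of $\sigma_a$.
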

\begin{figure}[htp]
    \centering
    \begin{subfigure}[t]{0.32\linewidth}
         \centering
         \includegraphics[width=\textwidth]{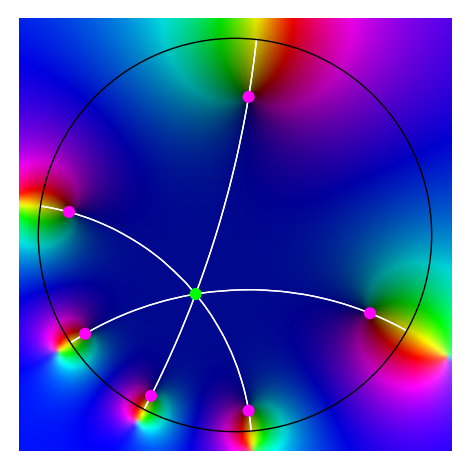}
         \caption{Hyperbolic geodesics (white) through opposite pairs of the zeros of $B$ (pink). The geodesics all intersect at $-a$ (green). }
     \end{subfigure}
    \hfill
    \begin{subfigure}[t]{0.32\linewidth}
         \centering
         \includegraphics[width=\textwidth]{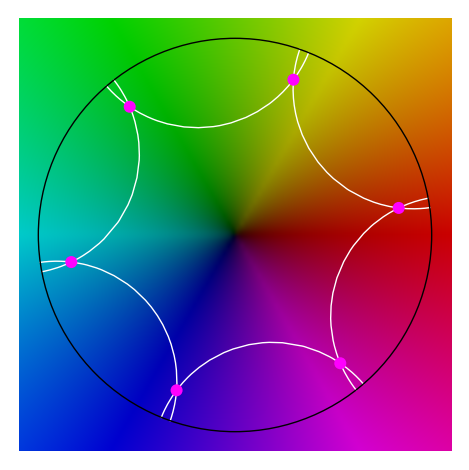}
         \caption{Hyperbolic geodesics (white) through consecutive pairs of elements of $\Sigma_a^{2k}$ (the $2k$-th roots of $a$) (pink).}
     \end{subfigure}
     \hfill
    \begin{subfigure}[t]{0.32\linewidth}
         \centering
         \includegraphics[width=\textwidth]{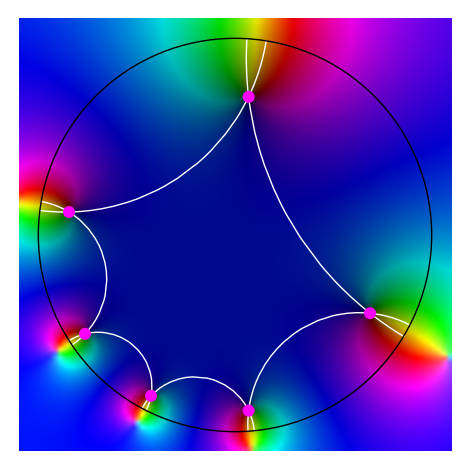}
         \caption{Hyperbolic geodesics (white) through consecutive pairs of the zeros of $B$ (pink). The angle at each zero matches the angle at each element of $\Sigma_a^{2k}$ in Subfigure (b).}
     \end{subfigure}
     \caption{Example of Proposition \ref{prop:reducible-blaschke-hyperbolic-geodesics} for a degree-$6$ reducible Blaschke product $B$ with $a = 0.2 + 0.3i$.}
    \label{fig:reducible-blaschke-hyperbolic-geodesics}
\end{figure}
\begin{proof}
Order the elements of $\Sigma_a^{2k}$ in the trigonometric orientation, that is, based on the principal argument 
\[0\leq\text{Arg}(\zeta^1f(a))<\text{Arg}(\zeta^{2}f(a))<\cdots<\text{Arg}(\zeta^{n}f(a))<2\pi.\] 
Let $\alpha,\beta$ be opposite pairs of zeroes, i.e. such that there exists $\omega\in\Sigma_a^{2k}$ with $\alpha=\sigma_a(\omega), \beta=\sigma_a(-\omega)$. The straight line though $\omega$ and $-\omega$ passes through $0$, and hence its image under $\sigma_a$ passes through $\sigma_a(0)=-a$. The image of the line must be a circle orthogonal to $\mathbb{T}$ passing through $\alpha,\beta$, showing that $\alpha,\beta$ and $-a$ are hyperbolically colinear. Letting $\omega$ range through the first $k$ elements of $\Sigma_{a}^{2k}$ we see that $-a$ lies at the intersection of the hyperbolic geodesics through opposite pairs of zeroes, proving the first claim. The second claim follows directly from conformality.
\end{proof}

We again point out that while Proposition \ref{prop:reducible-blaschke-hyperbolic-geodesics} deals with zeros of $B$, or in other words the preimages of $0$, the proof would work for intersections of hyperbolic geodesics connecting opposite pairs in any preimage set.

Recapitulating, we have found necessary conditions for a Blaschke product to be reducible in terms of its zeroes and critical points. However, given a reducible Blaschke product $B$, the product $\lambda B$ will satisfy these same conditions for all $\lambda\in\mathbb{T}$, but $\lambda B$ is only reducible if $\lambda = 1$. Thus it suffices to find the unimodular constant of our reducible Blaschke product.

\begin{thm}[Necessary and sufficient conditions for reducibility]\label{thm suff-neces-cond}
Let $B$ be a Blaschke product of degree $n$, and let $\mathcal{Z}$ be the set of (distinct) zeroes of $B$. Then $B$ is a reducible Blaschke product if and only if all of the following four conditions hold:
\begin{enumerate}
        \item $B$ has either one zero with multiplicity $n$, or $n$ zeroes with multiplicity one.
        \item $B$ has a unique critical point $-\xi$.
        \item $\sigma_{\xi}^{-1}(\mathcal{Z})=\Sigma_{\xi}^n$.
        \item $B(1)=\Delta_{\xi}$, where $\Delta_{\xi}=\frac{(1+\xi)^n-\xi(1+\overline{\xi})^n}{(1+\overline{\xi})^n-\xi(1+\xi)^n}$.
\end{enumerate}
In this case,
$$B(z)=\sigma_{\xi}\circ h\circ \sigma_{\xi}^{-1}=\Delta_{\xi}\prod_{w\in \mathcal{Z}}\left(\frac{(1-\overline{w})^2}{|1-w|^2}\sigma_w(z)\right).$$
    \end{thm}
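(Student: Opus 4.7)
My plan splits the proof into the two directions of the biconditional, followed by the explicit formula, which is obtained by pinning down a unimodular constant. For the forward direction, assume $B = \sigma_\xi \circ h \circ \sigma_\xi^{-1}$ with $h(z) = z^n$. Conditions (1) and (3) follow directly from Proposition \ref{zeros-reducible-B}, noting that the dichotomy between $n$ simple zeroes and a single zero of multiplicity $n$ corresponds exactly to $\xi \ne 0$ versus $\xi = 0$. Condition (2) is immediate from the preceding proposition characterizing the critical point of a reducible Blaschke product. For condition (4), one evaluates $B(1)$ by composition: first $\sigma_\xi^{-1}(1) = (1+\xi)/(1+\overline{\xi})$, then raise to the $n$-th power, then apply $\sigma_\xi$; after clearing denominators the result matches $\Delta_\xi$.

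For the converse, suppose $B$ satisfies conditions (1)--(4), and set $\widetilde{B} := \sigma_\xi \circ h \circ \sigma_\xi^{-1}$. By the forward direction $\widetilde{B}$ shares the same degree, the same zero set with multiplicities (via (1) and (3)), and the same value at $z=1$ (via (4)) as $B$. Hence $B/\widetilde{B}$ extends to a holomorphic function on a neighborhood of $\overline{\mathbb{D}}$ with no poles or zeroes in $\overline{\mathbb{D}}$ and is unimodular on $\mathbb{T}$, so by the maximum modulus principle applied to both $B/\widetilde{B}$ and its reciprocal, it is a unimodular constant. Condition (4) evaluates that constant at $z=1$, forcing it to be $1$. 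For the explicit product representation, any Blaschke product with zero set $\mathcal{Z}$ equals $\mu \prod_{w \in \mathcal{Z}} \sigma_w(z)$ for some $\mu \in \mathbb{T}$; evaluating at $z=1$ and using $B(1)=\Delta_\xi$ yields $\mu = \Delta_\xi \prod_w (1-\overline{w})/(1-w)$. Rewriting $(1-\overline{w})/(1-w) = (1-\overline{w})^2/|1-w|^2$ and distributing $\mu$ into the product gives the stated identity.

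The main obstacle I anticipate is handling the degenerate case $\xi=0$ cleanly, where $B=z^n$, $\mathcal{Z}=\{0\}$, and $\Sigma_0^n$ must be read as $\{0\}$: one must verify that conditions (2)--(4) remain meaningful (in particular $\Delta_0 = 1$) and that the product formula still yields $z^n$ once a single zero of multiplicity $n$ is accounted for. A subtler point is verifying that conditions (1)--(3) alone are insufficient: multiplying $B$ by any $\lambda \in \mathbb{T}$ preserves them but destroys reducibility unless $\lambda=1$, so condition (4) is genuinely needed to nail down the unimodular prefactor, which is why the statement includes all four conditions rather than just the first three.
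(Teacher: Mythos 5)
The paper states Theorem \ref{thm suff-neces-cond} and then ends the section, so there is no proof of record to compare against; your proposal supplies one, and it is correct. The forward direction is rightly delegated to Proposition \ref{zeros-reducible-B} (for conditions (1) and (3)) and to the unlabeled proposition on critical points of reducible products (for condition (2)), with condition (4) obtained by direct evaluation. Your converse --- two finite Blaschke products of the same degree with the same zeros counted with multiplicity differ by a unimodular constant, which is then pinned down by evaluating at $1$ --- is exactly the strategy the paper telegraphs in the paragraph preceding the theorem (``it suffices to find the unimodular constant''), and your closing observation that $\lambda B$ for $\lambda\in\mathbb{T}\setminus\{1\}$ satisfies (1)--(3) but not (4) correctly explains why the fourth condition is not redundant. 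Your handling of the degenerate case $\xi=0$ (where $\Sigma_0^n=\{0\}$, $\Delta_0=1$, and the displayed product over $w\in\mathcal{Z}$ must be read with multiplicity to recover $z^n$) is a genuine point that the statement glosses over.

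One correction you should make explicit rather than wave at: carrying out the evaluation in step (4) gives
$\sigma_\xi^{-1}(1)=(1+\xi)/(1+\overline{\xi})$ and hence
$B(1)=\bigl((1+\xi)^n-\xi(1+\overline{\xi})^n\bigr)/\bigl((1+\overline{\xi})^n-\overline{\xi}(1+\xi)^n\bigr)$,
whose denominator is the complex conjugate of its numerator, so the value is unimodular as it must be. The formula for $\Delta_\xi$ printed in the theorem has $\xi$ rather than $\overline{\xi}$ in the second term of the denominator and is not unimodular for non-real $\xi$; this is evidently a typo in the statement, and your claim that the computation ``matches $\Delta_\xi$'' is only true after that correction. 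With $\Delta_\xi$ so amended, the rest of your argument (including the derivation of the unimodular prefactor $\mu=\Delta_\xi\prod_{w}(1-\overline{w})^2/|1-w|^2$ from $\sigma_w(1)=(1-w)/(1-\overline{w})$) goes through.
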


\section{Concluding Remarks}\label{section with concluding remarks}

Another possible approach to studying Blaschke products is via algebraic invariants. Toward this end of connecting algebraic methods to geometric insights about Blaschke products, we initially considered the set
\[\mathcal{M}_{2\times 2}^B(\mathbb{T}) := \left\{M_a^\theta\in PGL(2,\mathbb{C}) : \exists\lambda_1,\lambda_2\in\mathbb{T}\quad \sigma_a^\theta(B^{-1}(\lambda_1))=B^{-1}(\lambda_2)\right\}\]
where $\sigma_a^\theta(z)=e^{i\theta}\frac{z-a}{1-\overline{a}{z}}$ and $M^\theta_a$ is the element of $PGL(2,\mathbb{C})$ associated to this Möbius transformation in the usual way. The idea behind this definition is to understand how automorphisms rearrange Blaschke $n$-gons in the unit circle. 
However, we found that because the set $\mathcal{M}_{2\times 2}^B(\mathbb{T})$ only contains automorphisms that shift one Blaschke $n$-gon to another, as illustrated in Example \ref{ex:pgl-non-group}, it does not have a group structure.

\begin{eg} \label{ex:pgl-non-group}
We use the result from Corollary 11.4 in \cite{Gorkin}. The result states that if $z_1,...,z_n$ and $w_1,...,w_n$ are two sets of points on $\mathbb{T}$ for which the principal arguments satisfy $0\leq \operatorname{arg}(z_1) < \operatorname{arg}(w_1)<\operatorname{arg}(z_2)<\operatorname{arg}(w_2)<...<\operatorname{arg}(z_n)<\operatorname{arg}(w_n)<2\pi$, then there exists a Blaschke product $B$ of degree $n$ such that $B(0)=0$, $B$ maps all the $z_j$ to the same image, and $B$ maps all the $w_j$ to the same image.

Take $n = 3$ for simplicity. Then the idea is to fix $z_1$, $z_2$, $z_3$, and $w_1$ and to change $w_2$ and $w_3$. To actually construct a counterexample, the same chapter of the book \cite[Chapter 11]{Gorkin} passes to the upper half plane model of the disk via the Möbius transformation $\phi(z):=i\left(\frac{1+z}{1-z}\right).$

We will construct a Blaschke product $B$ that maps the points $\zeta_{8}$,$\zeta_{8}\zeta_{3}$, and $\zeta_{8}\zeta_{3}^2$ to the same image and maps the points $i$, $-1$, and $\zeta_8^7$ to the same image, where $\zeta_8$ denotes the standard primitive $8$-th root of unity. We can then compare this to the Blaschke product $z^3$, which maps the first triple of points to a single image but not the second triple. That is, the Blaschke product $z^3$ has a Poncelet triangle with the elements of the first set as the vertices, but not one corresponding to the second set. We have $\phi(\zeta_{8})=-\frac{2\sqrt{2}}{4-\sqrt{2}}$, $\phi(\zeta_{8}\zeta_{3})=-\frac{2\sqrt{2-\sqrt{3}}}{4-\sqrt{2+\sqrt{3}}}$, $\phi(\zeta_{8}\zeta_3^2)=\frac{2\sqrt{2+\sqrt{3}}}{4-\sqrt{2-\sqrt{3}}}$, $\phi(i)=-1$, $\phi(-1)=0$, $\phi(\zeta_8^7)=\frac{2\sqrt{2}}{4-\sqrt{2}}$, and ordering them on the real line we have 
$$\phi(\zeta_8)<\phi(i)<\phi(\zeta_{8}\zeta_{3})<\phi(-1)<\phi(\zeta_8\zeta_3^2)<\phi(\zeta_8^7).$$
We can then construct the function $F$ from Lemma 11.3 of the same text \cite{Gorkin} to get 
$$F(z)=\frac{(z+1)z\left(z-\frac{2\sqrt{2}}{4-\sqrt{2}}\right)}{\left(z+\frac{2\sqrt{2}}{4-\sqrt{2}}\right)\left(z+\frac{2\sqrt{2-\sqrt{3}}}{4-\sqrt{2+\sqrt{3}}}\right)\left(z-\frac{2\sqrt{2+\sqrt{3}}}{4-\sqrt{2-\sqrt{3}}}\right)}$$
The Blaschke product we then want is $B=\phi^{-1}\circ F\circ \phi$.
\end{eg}

Because the set $\mathcal{M}_{2\times 2}^B(\mathbb{T})$ failed to carry a useful algebraic structure, we examined a more robust subset of disk automorphisms called the curve group of $B$. This group comprises the automorphisms that preserve all Blaschke $n$-gons, rather than just permuting a specific pair of them. The curve group is closely related to the group mentioned in \cite[Chapter 9]{Garcia} of continuous functions $u: \mathbb{T} \to \mathbb{T}$ satisfying $B \circ u = B$ on $\mathbb{T}$. However, we discovered that our findings were just corollaries of the more general results in \cite[Chapter 14]{Gorkin} and \cite{Aksoy}.

\section{Acknowledgements}
The authors are grateful to the anonymous referee for providing valuable feedback and suggestions which helped in the exposition of the paper and significantly shortened the proof of Theorem \ref{thm triangles}.
This project was part of the Polymath Jr. program in Summer 2022. We thank the program organizers for creating a platform to offer research experience opportunities to many students around the world.

\printbibliography

@article {Aksoy,
    AUTHOR = {Aksoy, Asuman G\"uven and Arici, Francesca and Celorrio, M.
              Eugenia and Gorkin, Pamela},
     TITLE = {Decomposable {B}laschke products of degree {$2^n$}},
   JOURNAL = {Trans. Amer. Math. Soc.},
  FJOURNAL = {Transactions of the American Mathematical Society},
    VOLUME = {376},
      YEAR = {2023},
    NUMBER = {9},
     PAGES = {6341--6369},
      ISSN = {0002-9947,1088-6850},
   MRCLASS = {30J10 (14N05 20B05 30D05 47A12)},
  MRNUMBER = {4630778},
       DOI = {10.1090/tran/8937},
       URL = {https://doi.org/10.1090/tran/8937},
}

@book {Gorkin,
    AUTHOR = {Daepp, Ulrich and Gorkin, Pamela and Shaffer, Andrew and Voss,
              Karl},
     TITLE = {Finding ellipses: What Blaschke products, Poncelet's theorem, and the numerical range know about each other},
    SERIES = {Carus Mathematical Monographs},
    VOLUME = {34},
    PUBLISHER = {MAA Press},
 ADDRESS  = {Providence, RI},
      YEAR = {2018},
     PAGES = {xi+268},
       DOI = {10.1090/car/034},
}

@book {Garcia,
    AUTHOR = "Garcia, Stephan Ramon and Mashreghi, Javad and Ross, William T.",
     TITLE = "Finite {B}laschke products and their connections",
 PUBLISHER = "Springer",
  ADDRESS = "Cham",
      YEAR = "2018",
     PAGES = "328",
       DOI = "10.1007/978-3-319-78247-8",
}

@book {Garnett,
    AUTHOR = {Garnett, John B.},
     TITLE = {Bounded analytic functions},
    SERIES = {Graduate Texts in Mathematics},
    VOLUME = {236},
   EDITION = {first},
 PUBLISHER = {Springer},
 ADDRESS = {New York},
      YEAR = {2007},
     PAGES = {459},
}

@article {GorkinWagner2017,
    AUTHOR = {Gorkin, Pamela and Wagner, Nathan},
     TITLE = {Ellipses and compositions of finite {B}laschke products},
   JOURNAL = {J. Math. Anal. Appl.},
  FJOURNAL = {Journal of Mathematical Analysis and Applications},
    VOLUME = {445},
      YEAR = {2017},
    NUMBER = {2},
     PAGES = {1354--1366},
      ISSN = {0022-247X},
   MRCLASS = {30J10 (15A60 52A10)},
  MRNUMBER = {3545246},
MRREVIEWER = {Manfred Stoll},
       DOI = {10.1016/j.jmaa.2016.01.067},
       URL = {https://doi.org/10.1016/j.jmaa.2016.01.067},
}

@misc{Reznik,
  AUTHOR = {Reznik, Dan},
     TITLE = {Power {C}ircles of {P}oncelet 3-{P}eriodics have {I}nvariant {T}otal {A}rea}, 
  YEAR = {January 26, 2021},
  note			= "YouTube \url{https://www.youtube.com/watch?v=_psLvzlWTvQ}"
}

\end{document}